\newcommand{\Aa}{\mathcal A}
\newcommand{\proset}{\,\mathrel{\lower 4pt\hbox{$\scriptscriptstyle/$}
\mkern -14mu\subseteq }\,} 
 \newtheorem{theorem}{Theorem}[section]
  \newtheorem{corollary}[theorem]{Corollary}
 \newtheorem{lemma}[theorem]{Lemma}
 \newtheorem{proposition}[theorem]{Proposition}
\newtheorem{remark}[theorem]{Remark}
 \newtheorem{example}[theorem]{Example}
\numberwithin{equation}{section}
\def\ker{\operatorname{ker}}
\def\Br{\operatorname{Br}}
\begin{document}
 \date{\today}
 
 \title{On the vanishing of Twisted negative K-theory and homotopy invariance}

 \author{ Vivek Sadhu}

 \address{Department of Mathematics, Indian Institute of Science Education and Research Bhopal, Bhopal Bypass Road, Bhauri, Bhopal-462066, Madhya Pradesh, India}
 \email{vsadhu@iiserb.ac.in, viveksadhu@gmail.com}
 \keywords{ Twisted $K$-groups, Homotopy invariance, Pr\"{u}fer domains}
 
\subjclass{14C35, 19D35, 19E08}

\thanks{Author was supported by NBHM Research Grant NBHM/MAT/2024-2025/102}

\begin{abstract}
 In this article, we revisit  Weibel's conjecture for twisted $K$-theory. We also examine the vanishing of twisted negative $K$-groups for Pr\"{u}fer domains. Furthermore, we observe that the homotopy invariance of twisted $K$-theory holds for (finite-dimensional) Pr\"{u}fer domains. 
\end{abstract}

 \maketitle
 
 \tableofcontents
 \section{Introduction}
 It is well known that for a regular noetherian scheme $X,$ the homotopy invariance of $K$-theory holds (i.e., the natural map $K_{n}(X)\to K_{n}(X \times \mathbb{A}^{r})$ is an isomorphism for all $r\geq 0$ and $n\in \mathbb{Z}$) and $K_{-n}(X)=0$ for all $n>0.$ This is not true for non-regular schemes in general. Therefore, it has been an interesting question to investigate certain classes of schemes for which homotopy invariance of algebraic $K$-theory holds and negative $K$-groups vanishes. In this direction, Weibel's conjectured  in \cite{Wei80} that for a $d$-dimensional Noetherian scheme $X,$ the following should hold:
 \begin{enumerate}
  \item $K_{-n}(X)=0$ for $n>d;$
  \item $K_{-n}(X)\cong K_{-n}(X \times \mathbb{A}^{r})$ for $n\geq d$ and $r\geq 0.$
 \end{enumerate}
This conjecture was first proven for varieties over a field (see \cite{CHSW}, \cite{GH} and \cite{Krishna}). For a finite-dimensional quasi-excellent Noetherian scheme, Kelly showed in \cite{SK} that the negative $K$-groups vanish (up to torsion) after dimension. In 2018, Kerz-Strunk-Tamme ultimately settled Weibel's conjecture (see Theorem B of \cite{KST}). A relative version of Weibel's conjecture is discussed in \cite{VS}.

In this article, we are mainly interested in similar types of questions (i.e., homotopy invariance and vanishing of negative $K$-groups) in the context of the twisted $K$-theory. Given an Azumaya algebra $\mathcal{A}$ over a scheme $S,$ one can define twisted $K$-group $K_{n}^{\mathcal{A}}(S)$ for $n\in \mathbb{Z}$ (see section \ref{def}). It is natural to ask Weibel's conjecture for $K_{n}^{\mathcal{A}}(S).$ In \cite{stap}, J. Stapleton discussed Weibel's conjecture for $K_{n}^{\mathcal{A}}(S)$ and proved the first part, i.e., vanishing of twisted negative $K$-groups (see Corollary 4.2 of \cite{stap}). The second part of this conjecture has also been discussed in the same paper except the boundary case, i.e., $n=d$ (see Theorem 4.3 of \cite{stap}). In section \ref{wei revisit}, we revisit Weibel's conjecture for twisted $K$-theory and give proof that also takes care of the boundary case. Here is our result (see Theorem \ref{twisted version of weibel's conj}):
 \begin{theorem}
    Let $S$ be a Noetherian scheme of dimension $d.$ Let  $\mathcal{A}$ be an Azumaya algebra of rank $q^2$ over $S.$ Then 
 \begin{enumerate}
  \item $K_{-n}^{\Aa}(S)=0$ for $n>d;$
  \item $S$ is $K_{-n}^{\Aa}$-regular for $n\geq d$, i.e., the natural map $K_{-n}^{\Aa}(S) \to K_{-n}^{\Aa}(S \times \mathbb{A}^{r})$ is an isomorphism for $n\geq d$ and $r\geq 0.$
 \end{enumerate}
   \end{theorem}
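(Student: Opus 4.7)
The plan is to follow the strategy of Kerz--Strunk--Tamme \cite{KST}, lifting their pro-cdh descent argument to the twisted setting. Since part (1) is Corollary 4.2 of \cite{stap} and part (2) for $n>d$ is Theorem 4.3 of \cite{stap}, the only new content is the boundary case $n=d$ in part (2). Using the Bass fundamental theorem for twisted $K$-theory, $K^{\Aa}_{-d}$-regularity of $S$ reduces inductively to the injectivity of $K^{\Aa}_{-d}(S\times\mathbb{A}^{r-1})\to K^{\Aa}_{-d}(S\times\mathbb{A}^{r})$ for every $r\geq 1$, since surjectivity follows from the Bass sequence together with the vanishing in part (1) applied to $S\times\mathbb{A}^{r}$ (of dimension $d+r$).

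The key technical ingredient will be a pro-cdh descent statement for twisted $K$-theory: for an abstract blow-up square
\[
\xymatrix{
E \ar[r] \ar[d] & X' \ar[d] \\
Y \ar[r] & X
}
\]
with $X'\to X$ proper and $X'\setminus E\xrightarrow{\sim} X\setminus Y$, the induced square of pro-spectra $\{K^{\Aa}(Y_n)\}\leftarrow \{K^{\Aa}(E_n)\}$, $K^{\Aa}(X)\to K^{\Aa}(X')$, with $Y_n$ and $E_n$ the infinitesimal thickenings, should be homotopy Cartesian. I would derive this by working throughout with perfect complexes of $\Aa$-modules: pullback preserves Azumaya algebras, so the derived blow-up constructions of \cite{KST} are functorial in the ambient $\infty$-category of perfect complexes, and the fiber sequences that underlie pro-cdh descent carry over formally to this enrichment.

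With pro-cdh descent in hand, the proof proceeds by Noetherian induction on $\dim S$. The base case $d=0$ reduces to twisted $K$-theory over Artinian rings, which is well-behaved by classical arguments: an Azumaya algebra over an Artinian ring is semi-local, and its $K$-theory is $\mathbb{A}^{1}$-homotopy invariant with vanishing negative groups. For the inductive step, choose an abstract blow-up $X'\to S$ with $X'$ regular (via Gabber's alterations or an analogous platification result) and $Y\subset S$ of dimension strictly less than $d$; over $X'$ all negative twisted $K$-groups vanish and $\mathbb{A}^{1}$-invariance holds, while $Y$ and the thickenings $Y_n$, $E_n$ (having the same underlying reduced dimension) are controlled by the inductive hypothesis. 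A diagram chase in the resulting pro-cdh square, combined with the vanishing in part (1), yields the desired injectivity.

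The main obstacle is establishing the pro-cdh descent for twisted $K$-theory in the second step. Even granting that the derived algebraic geometry of \cite{KST} extends formally, one must verify compatibility of the derived thickenings with the Azumaya twist and check that the fiber sequences of perfect $\Aa$-modules align with those of \cite{KST} under the forgetful map $K^{\Aa}\to K$. Formulating things in the language of sheaves of stable $\infty$-categories, with $\Aa$-twisted $K$-theory realized as the $K$-theory of a canonical such sheaf on $S$, is likely the cleanest route and avoids having to reprove the derived descent machinery from scratch.
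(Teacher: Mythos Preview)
Your proposal outlines a plausible strategy, but it takes a route that is far heavier than the one in the paper, and the part you flag as ``the main obstacle'' is precisely what the paper avoids entirely. The paper never attempts to establish pro-cdh descent for $K^{\Aa}$ or to rerun the Kerz--Strunk--Tamme induction with twisted coefficients. Instead, it reduces the twisted statement to the \emph{untwisted} Weibel conjecture via the Severi--Brauer variety: using Quillen's decomposition $K_n(SB(\Aa))\cong\bigoplus_{i=0}^{q-1}K_n^{\Aa^{\otimes i}}(S)$ (extended to all $n\in\Zb$), it suffices to show $K_{-n}(SB(\Aa))=0$ for $n>d$ and $N^rK_{-n}(SB(\Aa))=0$ for $n\ge d$. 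Of course $\dim SB(\Aa)=d+q-1>d$, so the untwisted Weibel conjecture for $SB(\Aa)$ alone is not enough; the key extra input is the \emph{relative} vanishing result (Theorem~3.8 of \cite{VS}) for the smooth projective structure map $\rho\colon SB(\Aa)\to S$, which gives $K_{-n}(\rho)=0$ for $n>d+1$ and $N^rK_{-n}(\rho)=0$ for $n>d$. Feeding this into the long exact sequence for $\rho$ makes $K_{-n}(S)\to K_{-n}(SB(\Aa))$ surjective for $n=d+1$ and $N^rK_{-d}(S)\to N^rK_{-d}(SB(\Aa))$ surjective, and then the ordinary Weibel conjecture for $S$ kills the targets. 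Your approach, if the twisted pro-cdh descent can be made rigorous, would yield a self-contained argument independent of \cite{VS}; the paper's approach trades that independence for a two-line reduction that bypasses all the derived machinery and handles the boundary case $n=d$ with no extra work.
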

  
  A subring $V$ of a field $K$ is said to be {\it valuation ring} if for each $0\neq a\in K,$ either $a\in V$ or $a^{-1}\in V.$ We say that an integral domain $R$ is a {\it Pr\"{u}fer} domain if it is locally a valuation domain, i.e., $R_{\mathfrak{p}}$ is a valuation domain for all prime ideals $\mathfrak{p}$ of $R.$ 
   In \cite{KM}, Kelly and Morrow observed that algebraic $K$-theory is homotopy invariant and  negative $K$-groups vanishes for valuation rings (see Theorem 3.3 of \cite{KM}). Later, Banerjee and Sadhu in \cite{BS} extended the above mentioned results for Pr\"{u}fer domains (see Theorem 1.1 of \cite{BS}). In section \ref{twisted val}, we investigate the same for twisted $K$-groups. More precisely, we show (see  Example \ref{example of rings} and Corollary \ref{cor of gen case}):
   \begin{theorem}
    Let $\mathcal{A}$ be an Azumaya algebra of rank $q^2$ over a ring $R$ and $SB(\Aa)$ be the associated Severi Brauer variety. Assume that $R$ is a Pr\"{u}fer domain with finite krull dimension. Then
  \begin{enumerate}
  
  \item $K_{-n}^{\Aa}(R)=0$ for $n> \dim (SB(\Aa));$ 
  
   \item the natural map $K_{n}^{\Aa}(R) \to K_{n}^{\Aa}(R[t_1, t_2, \dots, t_r])$ is an isomorphism for all $n\in \mathbb{Z}$ and $r\geq 0.$

  \end{enumerate}
   \end{theorem}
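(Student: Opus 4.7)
The approach is to reduce both statements to the corresponding properties of the ordinary $K$-theory of the Severi--Brauer variety $SB(\Aa)$ via Quillen's decomposition. For an Azumaya algebra $\Aa$ of rank $q^2$ over a commutative ring $R$, one has a natural isomorphism
\[
K_n(SB(\Aa))\;\cong\;\bigoplus_{i=0}^{q-1} K_n^{\Aa^{\otimes i}}(R)
\]
for every $n\in\Zb$, so each twisted group $K_n^{\Aa}(R)$ is a direct summand of $K_n(SB(\Aa))$. Since Severi--Brauer formation commutes with base change, $SB(\Aa\otimes_R R[t_1,\ldots,t_r])\cong SB(\Aa)\times_R \mathbb{A}^r_R$, and the decomposition is natural in $R$. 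Thus both parts of the theorem reduce to the corresponding untwisted statements for $SB(\Aa)$: (a) $K_{-n}(SB(\Aa))=0$ for $n>\dim SB(\Aa)$; (b) the pullback $K_n(SB(\Aa))\to K_n(SB(\Aa)\times_R \mathbb{A}^r_R)$ is an isomorphism for every $n\in\Zb$ and $r\geq 0$.

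To attack (a) and (b), I would apply Noetherian approximation. Writing $R=\mathrm{colim}_\lambda R_\lambda$ with $R_\lambda$ ranging over the filtered system of finitely generated $\Zb$-subalgebras (each of which is Noetherian), and using that $\Aa$ is finitely presented, $\Aa$ descends to an Azumaya algebra $\Aa_{\lambda_0}$ over some $R_{\lambda_0}$; for $\lambda\geq\lambda_0$ set $\Aa_\lambda=\Aa_{\lambda_0}\otimes_{R_{\lambda_0}} R_\lambda$. Then $SB(\Aa)=\lim_\lambda SB(\Aa_\lambda)$ along affine transition maps, so continuity of non-connective $K$-theory gives $K_*(SB(\Aa))=\mathrm{colim}_\lambda K_*(SB(\Aa_\lambda))$. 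For each Noetherian $R_\lambda$, Theorem~\ref{twisted version of weibel's conj} handles both vanishing of twisted negative $K$-groups and $\Aa_\lambda$-twisted homotopy invariance in the appropriate dimension range, which after the splitting above gives the analogous facts for $SB(\Aa_\lambda)$.

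The principal obstacle is matching the correct dimension bound in part (1): the naive approximation yields $K_{-n}(SB(\Aa_\lambda))=0$ only for $n>\dim SB(\Aa_\lambda)=\dim R_\lambda+q-1$, and $\dim R_\lambda$ can well exceed $d=\dim R$. To circumvent this I would first reduce to the valuation case by Zariski descent, which is valid because a Pr\"{u}fer domain is locally a valuation domain and non-connective $K$-theory satisfies Zariski descent. For a valuation ring $V$ of finite rank, a Kelly--Morrow/Gabber-type presentation expresses $V$ as a filtered colimit of smooth subalgebras over its prime subring; restricting $\Aa$ to such a smooth $R_\lambda$, the category of coherent $\Aa_\lambda$-modules has finite global dimension (\'etale-locally it is Morita equivalent to the category of coherent $\mathcal{O}_{R_\lambda}$-modules), so $K_{-n}^{\Aa_\lambda}(R_\lambda)=0$ for all $n>0$ and $K^{\Aa_\lambda}$ is homotopy invariant on $R_\lambda$. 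Taking the colimit establishes (1) and (2) for $V$, and a further Zariski-descent argument in the spirit of \cite[Theorem~1.1]{BS} then propagates these conclusions to every finite-dimensional Pr\"{u}fer domain, completing the proof.
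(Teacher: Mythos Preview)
Your reduction via Quillen's decomposition to the untwisted $K$-theory of $SB(\Aa)$ is exactly how the paper begins, and that step is fine. The difficulty lies in how you then handle $K_*(SB(\Aa))$.

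The Noetherian approximation route you sketch does not repair itself the way you hope. The assertion that a valuation ring $V$ admits a ``Kelly--Morrow/Gabber-type presentation'' as a filtered colimit of smooth subalgebras over its prime subring is not a theorem; by Popescu's criterion this would mean $\mathbb{Z}\to V$ (or $\mathbb{F}_p\to V$) is a regular morphism, which fails for general valuation rings (they are typically non-noetherian, and even when one can say something in equal characteristic via perfection-type tricks, no ind-smooth presentation over the prime ring is available in mixed characteristic). Kelly--Morrow do not argue this way: their input, via Antieau--Mathew--Morrow, is that valuation rings are \emph{weakly regular stably coherent}, which directly forces $K_{-n}=0$ and $N^rK_n=0$ without any approximation. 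So the step ``restricting $\Aa$ to such a smooth $R_\lambda$'' cannot be carried out as written, and the argument stalls there.

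The paper bypasses approximation altogether. Since Pr\"ufer domains are weakly regular stably coherent and this class is preserved under essentially smooth extensions, every local ring $\mathcal{O}_{SB(\Aa),x}$ is weakly regular stably coherent; hence the Zariski sheaves $\mathcal{K}_{n,\mathrm{zar}}$ vanish for $n<0$ and $\mathcal{N}^r\mathcal{K}_{n,\mathrm{zar}}$ vanish for all $n$. Feeding this into the Zariski descent spectral sequence
\[
H^p_{\mathrm{Zar}}\bigl(SB(\Aa),\mathcal{K}_{n,\mathrm{zar}}\bigr)\ \Longrightarrow\ K_{n-p}(SB(\Aa))
\]
(and its $N^r$ variant), together with $\dim SB(\Aa)<\infty$, immediately yields $K_{-n}(SB(\Aa))=0$ for $n>\dim SB(\Aa)$ and $N^rK_n(SB(\Aa))=0$ for all $n$. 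Quillen's splitting then finishes. I would suggest you replace the approximation argument by this spectral-sequence computation on $SB(\Aa)$, using the weakly regular stably coherent property of the stalks as the key local input.
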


   By Morita equivalence, for a ring $R$ and $n\in \mathbb{Z}$, $K_{n}(R)\cong K_{n}^{\Aa}(R)$  in the case when $\Aa$ is a matrix algebra over $R$. This isomorphism may not hold for all Azumaya algebras. In section \ref{k vs KA}, we examine the relationship between $K_{n}(R)$ and $K_{n}^{\Aa}(R),$ assuming $R$ is a valuation ring of characteristic $p.$ We show that there is an injection from  $K_{n}(R)$ to $K_{n}^{\Aa}(R)$ for all $n\geq 0$ provided the rank of $\Aa$ is $p^2$ (see Theorem \ref{injects}). 
   
   {\bf Acknowledgements:} The author would like to thank Charles Weibel for fruitful email exchanges. He would also like to thank the referee for
valuable comments and suggestions.
   
 \section{Twisted $K$-theory}\label{def}

 Let $A$ be an algebra (not necessarily commutative) over a commutative local ring $R.$ The opposite algebra $A^{op}$ of $A$ is the algebra $A$ with multiplication reversed. We say that $A$ is an {\it Azumaya algebra} over $R$ if it is free $R$-module of finite rank and the map $A \otimes_{R} A^{op} \to End_{R}(A), a\otimes a^{'}\mapsto (x\mapsto axa^{'})$ is an isomorphism. For example, the matrix algbera $M_{n}(R)$ is an Azumaya algebra over $R.$ Let $X$ be a scheme. An $\mathcal{O}_{X}$-algebra $\Aa$ is said to be an {\it Azumaya algebra} over $X$ if it is coherent, locally free  as an $\mathcal{O}_{X}$-module and  $\Aa_{x}$ is an Azumaya algebra over $\mathcal{O}_{X,x}$ for any point $x\in X.$ Equivalently, $\Aa$ is \'etale locally isomorphic to $M_{n}(\mathcal{O}_{X})$ for some $n.$ For details, see \cite{Milne}.
 
 \subsection{Twisted $K$-groups}
 Let $\mathcal{A}$ be an Azumaya algebra over a scheme $S.$ Let ${\bf{Vect}^{\mathcal{A}}}(S)$ denote the category of vector bundles on $S$ that are left modules for $\mathcal{A}.$ The category ${\bf{Vect}^{\mathcal{A}}}(S)$ is exact. The twisted $K$-theory space is defined by $K^{\mathcal{A}}(S):= K({\bf{Vect}^{\mathcal{A}}}(S)).$ For $n\geq 0,$ the $n$-th twisted $K$-group $K_{n}^{\mathcal{A}}(S)$ is defined as $\pi_{n}(K({\bf{Vect}^{\mathcal{A}}}(S)).$ 

Write $S[t]$ for $S\times_{\mathbb{Z}} \mathbb{Z}[t]$ and $S[t, t^{-1}]$ for $S\times_{\mathbb{Z}} \mathbb{Z}[t, t^{-1}].$ Since the projection map $p: S[t]\to S$ is flat, it induces an exact functor $p^{*}: K^{\mathcal{A}}(S) \to K^{p^{*}\mathcal{A}}(S[t]).$ Thus we have maps between twisted $K$-groups $K_{n}^{\mathcal{A}}(S) \to K_{n}^{p^{*}\mathcal{A}}(S[t]).$ By abuse of notation, we write $K_{n}^{\mathcal{A}}(S[t])$ instead of $K_{n}^{p^{*}\mathcal{A}}(S[t]).$ Similarly, we also have maps between $K_{n}^{\mathcal{A}}(S) \to K_{n}^{\mathcal{A}}(S[t, t^{-1}]).$ Following Bass (see chapter XII of \cite{b}), the twisted negative $K$-group $K_{-1}^{\mathcal{A}}(S)$ is defined as
$$Coker[K_{0}^{\mathcal{A}}(S[t])\times K_{0}^{\mathcal{A}}(S[t^{-1}]) \stackrel{\pm}\to K_{0}^{\mathcal{A}}(S[t, t^{-1}])].$$ By iterating, we have 

 $$K_{-n}^{\mathcal{A}}(S):= Coker[K_{-n+1}^{\mathcal{A}}(S[t])\times K_{-n+1}^{\mathcal{A}}(S[t^{-1}]) \stackrel{\pm}\to K_{-n+1}^{\mathcal{A}}(S[t, t^{-1}])].$$ There is a split exact sequence for $n\in \mathbb{Z}$ (see section 3 of \cite{stap})
 \begin{equation}\label{fund seq}
  0 \to K_{n}^{\mathcal{A}}(S) \stackrel{\Delta}\to K_{n}^{\mathcal{A}}(S[t])\times K_{n}^{\mathcal{A}}(S[t^{-1}]) \stackrel{\pm}\to K_{n}^{\mathcal{A}}(S[t, t^{-1}])\to K_{n-1}^{\mathcal{A}}(S)\to 0,
 \end{equation}
where $\Delta(a)=(a, a)$ and $\pm(a,b)=a-b.$

\subsection{Quillen's generalized projective bundle formula}
It is well-known that there is a natural bijection of sets
\tiny
$$\{ {\rm Severi-Brauer~ varieties~  of~ relative ~dimension~ (q-1) ~over~ S} \}\longleftrightarrow \{ {\rm Azumaya~ algebras ~over~S ~of~ rank~ q^2 } \}.$$
\normalsize
Let $\mathcal{A}$ be an Azumaya algebra of rank $q^2$ over a scheme $S.$ One can associate a Severi-Brauer variety $SB(\Aa)$ of relative dimension $q-1$ over $S.$ The structure morphism $SB(\mathcal{A}) \to S$ is always smooth and projective. Quillen's generalized projective bundle formula state that there is a natural isomorphism for each $n\geq 0$(see Theorem 4.1 of \cite{DQ} or V.1.6.6 of \cite{wei 1}),
\begin{equation}\label{Quillen decomposition}
 K_{n}(SB(\Aa))\cong \bigoplus_{i=0}^{q-1} K_{n}^{\Aa ^{\otimes i}}(S).
\end{equation}
We consider the following commutative diagram
 \tiny
 $$\begin{CD} 0 @. 0  @. 0 @.  \\
   @VVV   @VVV @VVV \\
   K_{0}(S) @> injects>> K_{0}(S[t])\times K_{0}(S[t^{-1}]) @>>> K_{0}(S[t, t^{-1}) @>>> K_{-1}(S) @>>> 0\\
   @VVV      @VVV     @VVV @VVV \\
    K_{0}(SB(\Aa)) @> injects>> K_{0}(SB(\Aa)[t])\times K_{0}(SB(\Aa)[t^{-1}]) @>>> K_{0}(SB(\Aa)[t, t^{-1}) @>>> K_{-1}(SB(\Aa)) @>>> 0 \\
    @VVV   @VVV @VVV @VVV\\
   \bigoplus_{i=1}^{q-1}K_{0}^{\Aa ^{\otimes i}}(S) @> injects >> \bigoplus_{i=1}^{q-1}K_{0}^{\Aa ^{\otimes i}}(S[t])\times \bigoplus_{i=1}^{q-1}K_{0}^{\Aa ^{\otimes i}}(S[t^{-1}) @>>> \bigoplus_{i=1}^{q-1}K_{0}^{\Aa ^{\otimes i}}(S[t, t^{-1}]) @>>> \bigoplus_{i=1}^{q-1}K_{-1}^{\Aa ^{\otimes i}}(S) @>>> 0 \\
   @VVV @VVV @VVV \\
   0 @. 0  @. 0 @. 
 \end{CD}.$$
\normalsize
By the fundamental theorem of $K$-theory and (\ref{fund seq}), the rows are  split exact. The first three columns are also split exact by (\ref{Quillen decomposition}).  Finally, a diagram chase gives a natural isomorphism
\begin{equation}\label{Quillen decomposition -1}
 K_{-1}(SB(\Aa))\cong \bigoplus_{i=0}^{q-1} K_{-1}^{\Aa ^{\otimes i}}(S).
\end{equation}
By iterating, we conclude that for each  $n\in \mathbb{Z},$ there is a natural isomorphism
\begin{equation}\label{Quillen decomposition for all n}
 K_{n}(SB(\Aa))\cong \bigoplus_{i=0}^{q-1} K_{n}^{\Aa ^{\otimes i}}(S).
\end{equation}
\begin{proposition}\label{reg case}
 Let $\mathcal{A}$ be an Azumaya algebra of rank $q^2$ over a Noetherian regular scheme $S.$ Then $K_{n}^{\Aa}(S)=0$ for $n<0$ and $K_{n}^{\Aa}(S)\cong K_{n}^{\Aa}(S[t_1, \dots, t_r])$ for all $n$ and $r\geq0.$ 
\end{proposition}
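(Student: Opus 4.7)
The plan is to reduce both assertions to the classical (i.e.\ untwisted) statements applied to the Severi--Brauer variety $SB(\Aa)$, and then extract the desired summand via Quillen's generalized projective bundle formula \eqref{Quillen decomposition for all n}. Since the structure morphism $SB(\Aa)\to S$ is smooth and projective and $S$ is Noetherian regular, $SB(\Aa)$ is itself a Noetherian regular scheme; hence the classical results recalled in the introduction give $K_{n}(SB(\Aa))=0$ for $n<0$ and $K_{n}(SB(\Aa))\cong K_{n}(SB(\Aa)\times \mathbb{A}^{r})$ for all $n\in \mathbb{Z}$ and $r\geq 0$.

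For the first assertion, fix $n<0$ and apply \eqref{Quillen decomposition for all n} to obtain
\[
0=K_{n}(SB(\Aa))\cong \bigoplus_{i=0}^{q-1}K_{n}^{\Aa^{\otimes i}}(S),
\]
so every summand vanishes; in particular $K_{n}^{\Aa}(S)=0$.

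For the second assertion, observe that $SB(\Aa)\times \mathbb{A}^{r}=SB(\Aa)\times_{S}S[t_{1},\dots,t_{r}]$ is the Severi--Brauer variety attached to the Azumaya algebra $p^{*}\Aa$ over $S[t_{1},\dots,t_{r}]$, where $p$ is the projection to $S$. Applying \eqref{Quillen decomposition for all n} to both sides and combining with homotopy invariance for $SB(\Aa)$ yields
\[
\bigoplus_{i=0}^{q-1}K_{n}^{\Aa^{\otimes i}}(S)\cong \bigoplus_{i=0}^{q-1}K_{n}^{\Aa^{\otimes i}}(S[t_{1},\dots,t_{r}]).
\]
The only remaining step is to check that this isomorphism is compatible with the natural pullback maps summand-by-summand, that is, that the Quillen decomposition is natural with respect to the flat base change $p$. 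Granting this naturality, the $i=1$ component of the above isomorphism is precisely the natural map $K_{n}^{\Aa}(S)\to K_{n}^{\Aa}(S[t_{1},\dots,t_{r}])$, and it is an isomorphism. The main (mild) obstacle is therefore verifying this compatibility, which ought to follow from the fact that Quillen's decomposition is built from operations on the category of vector bundles on $SB(\Aa)$ that commute with flat pullback; once it is in hand, the rest of the argument is a purely formal diagram check modeled on the commutative diagram already used to derive \eqref{Quillen decomposition -1}.
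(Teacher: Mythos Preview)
Your proposal is correct and follows essentially the same route as the paper: reduce to the classical vanishing and homotopy invariance for the Noetherian regular scheme $SB(\Aa)$, then read off the $i=1$ summand via the decomposition \eqref{Quillen decomposition for all n}. The naturality you flag as the ``mild obstacle'' is already built into the statement of \eqref{Quillen decomposition for all n} (the isomorphism is asserted to be natural), so no further work is needed there.
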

\begin{proof}
 Since $S$ is a Noetherian regular scheme, so is $SB(\Aa).$ In this situation, we know $K_{n}(SB(\Aa))=0$ for $n<0$ and $K_{n}(SB(\Aa))\cong K_{n}(SB(\Aa)[t_1, \dots, t_r])$ for all $n$ and $r\geq 0.$ By (\ref{Quillen decomposition for all n}), we get the result.
\end{proof}

\subsection{Brauer groups vs Twisted $K$-theory} We say that two unital rings $A$ and $B$ (possibly non-commutative) are {\it Morita equivalent} if the categories ${\bf Mod}_{A}$  and ${\bf Mod}_{B}$ of right modules are equivalent. For example, a unital ring $R$ is Morita equivalent to $M_{n}(R)$ for $n\geq 0.$

 Two Azumaya algebras $\Aa$ and $\mathcal{B}$ over a commutative ring $R$ are Morita equivalent if and only if there exist finitely generated projective $R$-modules $P$ and $Q$ such that $\Aa \otimes_{R} End(P)\cong \mathcal{B}\otimes_{R} End(Q)$ (see Theorem 1.3.15 of \cite{cal}). However, this is not true for Azumaya algebras over scheme, for instance see Example 1.3.16 of \cite{cal}. If $R$ is a commutative local ring then  $\Aa$ and $\mathcal{B}$ are Morita equivalent if and only if $M_{n}(\Aa)\cong  M_{m}(\mathcal{B})$ for $n, m>0.$
The Brauer group $\Br(R)$ of a commutative ring $R$ consists of Morita equivalence classes of Azumaya algebras over $R$ (see \cite{Milne}). The group operation on $\Br(R)$ is $\otimes_{R}.$ An element of $\Br(R)$ is represented by a class $[\Aa],$ where $\Aa$ is an Azumaya algebra over $R.$ The inverse of $[\Aa]$ is given by $[\Aa^{op}].$

Let $R$ be a commutative ring with unity. For $n\in \mathbb{Z},$ we consider the set
 $$\mathcal{F}_{n}=\{K_{n}^{\Aa}(R)| [\Aa]\in \Br(R)\}.$$
  An equivalence relation $\sim$ on $\mathcal{F}_{n}$ is given by  $K_{n}^{\Aa}(R) \sim K_{n}^{\Aa^{'}}(R)$ if  $K_{n}^{\Aa}(R)\cong K_{n}^{\Aa^{'}}(R).$ Define $BK_{n}(R):= \mathcal{F}_{n}/\sim.$ An element of $BK_{n}(R)$ is represented by a class $(K_{n}^{\Aa}(R)).$

 \begin{lemma}
  For $n\in \mathbb{Z},$ $BK_{n}(R)$ is an abelian group with the operation
  $$(K_{n}^{\Aa}(R))\ast (K_{n}^{\Aa^{'}}(R))= (K_{n}^{\Aa \otimes_{R}\Aa^{'}}(R)).$$
 \end{lemma}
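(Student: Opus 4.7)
My plan is to inherit the abelian group structure on $BK_{n}(R)$ from the Brauer group $\Br(R)$ via the natural surjection of sets $[\Aa]\mapsto (K_{n}^{\Aa}(R))$. The three steps are: establish well-definedness of $\ast$; verify the four group axioms by importing them from the tensor product of Azumaya algebras; and check that the unit and inverses come out as expected.

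For well-definedness, suppose $\Aa$ and $\Bb$ are Brauer equivalent, say $\Aa\otimes_{R} M_{r}(R)\cong \Bb\otimes_{R} M_{s}(R)$ for some positive integers $r,s$. The Morita equivalence between any ring $C$ and its matrix ring $M_{k}(C)$ produces an exact equivalence between ${\bf Vect}^{\Aa}(R)$ and ${\bf Vect}^{\Bb}(R)$, and hence an isomorphism $K_{n}^{\Aa}(R)\cong K_{n}^{\Bb}(R)$. Consequently the class $(K_{n}^{\Aa\otimes_{R}\Aa'}(R))$ depends only on the Brauer classes of $\Aa$ and $\Aa'$, so the rule $(K_{n}^{\Aa}(R))\ast (K_{n}^{\Aa'}(R)):=(K_{n}^{\Aa\otimes_{R}\Aa'}(R))$ descends to a well-defined operation on the image $BK_{n}(R)$ of the surjection $\Br(R)\twoheadrightarrow BK_{n}(R)$.

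Next I would verify the group axioms. Associativity and commutativity follow from the canonical algebra isomorphisms $(\Aa\otimes \Bb)\otimes \Cc\cong \Aa\otimes(\Bb\otimes \Cc)$ and $\Aa\otimes \Bb\cong \Bb\otimes \Aa$, which yield equalities of the corresponding classes $(K_{n}^{-}(R))$. The identity element is $(K_{n}(R))=(K_{n}^{R}(R))$, since $R$ is itself an Azumaya algebra over $R$ and $\Aa\otimes_{R}R\cong \Aa$. For inverses, the defining isomorphism $\Aa\otimes_{R}\Aa^{op}\xrightarrow{\sim}\operatorname{End}_{R}(\Aa)$ presents $\Aa\otimes_{R}\Aa^{op}$ as (locally) a matrix algebra over $R$, and Morita invariance then gives $K_{n}^{\Aa\otimes_{R}\Aa^{op}}(R)\cong K_{n}(R)$, so $(K_{n}^{\Aa^{op}}(R))$ is inverse to $(K_{n}^{\Aa}(R))$.

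The main obstacle is the well-definedness step, because strictly speaking the relation $\sim$ on $\mathcal{F}_{n}$ is abstract isomorphism of abelian groups, which is a priori coarser than Brauer equivalence; two Azumaya algebras with distinct Brauer classes could in principle produce isomorphic twisted $K$-groups. Factoring the definition through $\Br(R)$, as above, bypasses this subtlety: once the operation is shown to be well-defined on $\Br(R)$ and compatible with the surjection to $BK_{n}(R)$, any further coincidence of classes on the target is automatically a congruence, and the group structure descends cleanly.
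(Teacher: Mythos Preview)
Your verification of closure, associativity, commutativity, the identity $(K_n(R))$, and the inverse $(K_n^{\Aa^{op}}(R))$ via Morita invariance matches the paper's proof almost verbatim; the paper records exactly those four points and stops, without discussing well-definedness at all.

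Your additional attempt to handle well-definedness, however, contains a genuine gap. You correctly observe that the relation $\sim$ on $\mathcal{F}_n$ is abstract isomorphism of abelian groups, a priori coarser than Brauer equivalence, and you propose to repair this by factoring the operation through the surjection $\Br(R)\twoheadrightarrow BK_n(R)$. But the claim that ``any further coincidence of classes on the target is automatically a congruence'' is false in general: a set-theoretic surjection from a group does not transport the group law to the target unless the fibres are the cosets of a (normal) subgroup, and that is precisely what is in question here. Concretely, if $\Aa$ and $\mathcal{B}$ are Azumaya with $[\Aa]\neq[\mathcal{B}]$ in $\Br(R)$ yet $K_n^{\Aa}(R)\cong K_n^{\mathcal{B}}(R)$, well-definedness of $\ast$ demands $K_n^{\Aa\otimes\mathcal{C}}(R)\cong K_n^{\mathcal{B}\otimes\mathcal{C}}(R)$ for every Azumaya $\mathcal{C}$; taking $\mathcal{C}=\Aa^{op}$ reduces this to $K_n(R)\cong K_n^{\mathcal{B}\otimes\Aa^{op}}(R)$, which nothing in your argument (or in Morita invariance alone) supplies. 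So you go beyond the paper in at least flagging the issue, but the proposed resolution does not close it; the paper's own proof simply leaves the point unaddressed.
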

\begin{proof}
 If $\Aa$ and $\mathcal{B}$ both are Azumaya algebras over $R$ then $\Aa \otimes_{R} \mathcal{B}$ is also an Azumaya algebra over $R.$ Thus, $\ast$ is closed. Since $\otimes_{R}$ is associative and abelian, so is $\ast.$ We know that algebraic $K$-theory is Morita invariant, i.e., for all $n\in \mathbb{Z},$ $K_{n}(R)\cong K_{n}(S)$  whenever $R$ and $S$ are Morita equivalent. This implies that $(K_{n}(R))$ is the identity element. The inverse of $(K_{n}^{\Aa}(R))$ is given by $(K_{n}^{\Aa^{op}}(R)).$  
\end{proof}

 We define a map $\psi_{n}: \Br(R) \to BK_{n}(R), [\Aa] \mapsto (K_{n}^{\Aa}(R))$ for each $n\in \mathbb{Z}.$ Each $\psi_{n}$ is a well defined map because $K$-theory is Morita invariant. Moreover, one can check the following:
 
 \begin{proposition}
  For a commutative ring $R,$  there is a short exact sequence
  $$ 0\to \ker \psi_{n}\to \Br(R) \to BK_{n}(R) \to 0$$ of abelian group for each $n\in \mathbb{Z}.$ Moreover,
  $$\ker \psi_{n}= \{\Aa\in Az(R)| K_{n}^{\Aa}(R)\cong K_{n}(R)\}.$$
  \end{proposition}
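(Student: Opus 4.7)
The plan is to verify three ingredients: that $\psi_n$ is a group homomorphism, that it is surjective, and that its kernel admits the stated description. Well-definedness has already been observed in the paragraph preceding the statement (via Morita invariance of $K$-theory), so I can take $\psi_n$ as a well-defined map of sets and build from there.

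First, I would check multiplicativity. For $[\Aa], [\mathcal{B}] \in \Br(R)$, the Brauer-group operation is tensor product over $R$, while the operation $\ast$ on $BK_n(R)$ was defined in the preceding lemma by $(K_n^{\Aa}(R)) \ast (K_n^{\mathcal{B}}(R)) = (K_n^{\Aa \otimes_R \mathcal{B}}(R))$. Therefore
$$\psi_n([\Aa]\cdot[\mathcal{B}]) = (K_n^{\Aa \otimes_R \mathcal{B}}(R)) = \psi_n([\Aa]) \ast \psi_n([\mathcal{B}]).$$
The identity $[R] \in \Br(R)$ maps to $(K_n^{R}(R)) = (K_n(R))$, which is the identity of $BK_n(R)$ as identified in the lemma, so $\psi_n$ is a group homomorphism.

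Next, surjectivity is immediate from the construction $BK_n(R) = \mathcal{F}_n/\sim$: by definition, every element of $BK_n(R)$ has a representative of the form $K_n^{\Aa}(R)$ for some Azumaya algebra $\Aa$ over $R$, and this class is exactly $\psi_n([\Aa])$. For the kernel, a class $[\Aa]$ lies in $\ker \psi_n$ precisely when $(K_n^{\Aa}(R)) = (K_n(R))$ in $BK_n(R)$; by the definition of the relation $\sim$ on $\mathcal{F}_n$, this means $K_n^{\Aa}(R) \cong K_n(R)$ as abelian groups, which is the advertised description.

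I do not anticipate a serious obstacle: the proposition is essentially a restatement of the definitions once $\psi_n$ is known to be well-defined. The only mildly delicate point is the identification of the identity of $BK_n(R)$ with $(K_n(R))$, which rests on the fact that $R$ is an Azumaya algebra over itself with $K_n^{R}(R) = K_n(R)$, together with the use of Morita invariance to deduce that the class of $K_n(R)$ serves as a two-sided identity under $\ast$.
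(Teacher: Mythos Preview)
Your proposal is correct and matches the paper's treatment: the paper does not supply a proof at all, merely prefacing the proposition with ``one can check the following,'' and your verification of multiplicativity, surjectivity, and the kernel description is exactly the routine check the paper leaves to the reader.
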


 \begin{remark}{\rm
 \begin{enumerate}
  \item If $\Br(R)=0$ then there are no twisted $K$-groups. 
  \item If $R=\mathbb{R}$ then $\Br(\mathbb{R})=\mathbb{Z}/2\mathbb{Z}=\{\mathbb{R}, \mathbb{H}\}.$ We know $K_{1}^{\mathbb{H}}(\mathbb{R})\ncong K_{1}(\mathbb{R})$ (see Table VI.3.1.1 of \cite{wei 1}). In this case, $\ker \psi_{1}=0.$ By Proposition \ref{reg case}, $K_{n}^{\mathbb{H}}(\mathbb{R})=0$ for $n<0.$ So, $\ker \psi_{n}= \mathbb{Z}/2\mathbb{Z}$ for $n<0.$
 \end{enumerate}}
\end{remark}
 
\section{Twisted version of Weibel's Conjecture}\label{wei revisit}
Throughout, $\Aa$ is an Azumaya algebra of rank $q^2$ over a scheme $S$ and $SB(\mathcal{A})$ is the associated Severi-Brauer variety. We would like to understand the $K$-theory of the structure map $\rho: SB(\mathcal{A}) \to S.$ 

Let $f: X\to S$ be a map of schemes. 
Let $K(f)$ denote the homotopy fibre of $K(S) \to K(X).$ Here $K(X)$ denotes the Bass non-connective $K$-theory spectrum of a scheme $X.$ We have the associated long exact sequence
\begin{equation}\label{les}
 \dots \to K_{n}(f) \to K_n(S) \to K_{n}(X)\to K_{n-1}(f)\to K_{n-1}(S)\to \dots
\end{equation}

 Let $F$ be a functor from category of rings (or schemes) to abelian groups. Let $NF(X)= \ker [F(X\times \mathbb{A}^{1}) \to F(X)].$ There is a natural decomposition $F(X\times \mathbb{A}^{1})\cong F(X) \oplus NF(X).$ By iterating, one can define $N^{t}F(X).$ We have a natural decomposition $F (X\times \mathbb{A}^{r})\cong (1+N)^{r}F(X).$  We say that $X$ is $F$-regular if the natural map $F(X) \to F (X\times \mathbb{A}^{r})$ is an isomorphism for $r\geq 0.$ Equivalently, $N^{r}F(X)=0$ for $r>0.$
 
 By comparing the exact sequences (\ref{les}) for $f$ and $f \times \mathbb{A}^{1}:$ $X\times \mathbb{A}^{1} \to S \times \mathbb{A}^{1},$ a diagram chase gives a long exact sequence for $NK_{*}.$ By iterating, we also have a long exact sequence for $N^{r}K_{*}$ (some more details related to $NK_{*}$-groups can be found in section 3 of \cite{VS})
\begin{equation}\label{les2}
 \dots \to N^{r}K_{n}(f) \to N^{r}K_n(S) \to N^{r}K_{n}(X)\to N^{r}K_{n-1}(f)\to N^{r}K_{n-1}(S)\to \dots
\end{equation}
 
 The following result is due to Kerz-Strunk-Tamme.
 
\begin{theorem}\label{k-dimension conj}
 Let $X$ be a Noetherian scheme of dimension $d.$ Then 
 \begin{enumerate}
  \item $K_{-n}(X)=0$ for $n>d;$
  \item $X$ is $K_{-n}$-regular for $n\geq d$, i.e., the natural map $K_{-n}(X) \to K_{-n}(X \times \mathbb{A}^{r})$ is an isomorphism for $n\geq d$ and $r\geq 0.$
 \end{enumerate}

\end{theorem}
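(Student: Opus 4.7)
The plan is to follow the approach of Kerz--Strunk--Tamme: establish a pro-cdh descent property for the Bass non-connective $K$-theory spectrum, and then induct on the Krull dimension of $X$.

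First I would establish the pro-cdh descent. Given an abstract blow-up square --- a closed immersion $Z \hookrightarrow X$ together with a proper morphism $\pi: Y \to X$ that is an isomorphism over $X \setminus Z$ --- set $E = Y \times_X Z$, and write $Z_n \subset X$ and $E_n \subset Y$ for the $n$-th infinitesimal thickenings. The aim is to show that the induced square of pro-spectra
$$\{K(X) \to K(Y),\; K(Z_n) \to K(E_n)\}_{n}$$
is pro-cartesian. The technical heart is to replace ordinary blow-ups by derived blow-ups, so that each square becomes a derived pushout along a closed immersion, and then to apply the excision/localization sequences that hold in $K$-theory of derived schemes.

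Second I would induct on $d = \dim X$. In the base case $d = 0$, $X$ is a finite disjoint union of spectra of local Artinian rings; nilpotent invariance of negative $K$-theory reduces each factor to its residue field, where the statement is classical. For the inductive step, assume the theorem for all Noetherian schemes of dimension less than $d$, and given $X$ of dimension $d$ produce an abstract blow-up square with $Y$ regular and $\dim Z < d$ --- using resolution of singularities in characteristic zero, or (as Kerz--Strunk--Tamme actually do) derived blow-up arguments that work in arbitrary characteristic. For $Y$ regular, the classical result gives $K_{-n}(Y) = 0$ for $n > 0$ and $K_{-n}$-regularity of $Y$ for all $n$. For $Z$ and $E$ the dimension is strictly less than $d$, so the inductive hypothesis applies, and crucially $Z_n$ and $E_n$ share the Krull dimension of $Z$ and $E$, so the statement passes to the pro-systems $\{Z_n\}_n$ and $\{E_n\}_n$. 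The long exact sequence coming from the pro-cdh square, combined with the analogous long exact sequences (\ref{les}) and (\ref{les2}) for $K_*$ and $N^r K_*$, then propagates the vanishing and $K_{-n}$-regularity from $Y$, $\{Z_n\}_n$, $\{E_n\}_n$ to $X$.

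The principal obstacle is establishing the pro-cdh descent; this is the technical heart of Kerz--Strunk--Tamme and requires substantial machinery from derived algebraic geometry --- derived pushouts of commutative rings, derived blow-ups, and a careful comparison between connective and non-connective $K$-theory at the pro-level. A secondary subtlety is handling mixed and positive characteristic, where classical resolution of singularities is unavailable; this is precisely what forces the systematic use of derived blow-ups rather than their classical counterparts. Once pro-cdh descent is in place, the inductive bookkeeping on dimension is essentially formal.
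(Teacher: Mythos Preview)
The paper does not prove this theorem at all: its entire proof is the single line ``See Theorem B of \cite{KST}.'' So you have already gone well beyond what the paper offers by sketching the Kerz--Strunk--Tamme argument itself.

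Your outline is broadly faithful to \cite{KST}: pro-cdh descent for non-connective $K$-theory (their Theorem~A) is indeed the technical core, and it is indeed proved via derived blow-ups and comparison with the classical picture. One point of drift: in the inductive step you aim to produce an abstract blow-up square with $Y$ \emph{regular}. In characteristic zero that is fine via resolution of singularities, but in arbitrary characteristic KST do \emph{not} arrange $Y$ to be regular --- resolution is unavailable. Instead, once pro-cdh descent is established, they deduce Theorem~B by showing that the presheaves $K_{-n}$ are cdh sheaves and invoking the bound on cdh-cohomological dimension of a Noetherian scheme by its Krull dimension (combined with the vanishing of negative $K$-theory of fields). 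The derived blow-up machinery is used to prove pro-cdh descent, not as a substitute for resolution in the inductive step on $X$. Your hedge ``as Kerz--Strunk--Tamme actually do'' gestures at this, but the surrounding sentence still reads as if $Y$ ends up regular, which is not the mechanism. Otherwise the sketch is sound.
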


\begin{proof}
 See Theorem B of \cite{KST}.
\end{proof}

A relative version of the aforementioned theorem is as follows:
\begin{theorem}\label{smooth quasiprojective}
    Let $f: X \to S$ be a smooth, quasi-projective map of noetherian schemes with $S$ reduced. Assume that $\dim S= d.$ Then $K_{-n}(f)=0$ for $n>d+1$ and $f$ is $K_{-n}$-regular for $n>d,$ i.e., the natural map $K_{-n}(f) \to K_{-n}(f \times \mathbb{A}^{r})$ is an isomorphism for $n> d$ and $r\geq 0.$ Here $f \times \mathbb{A}^{r}$ denotes $X\times \mathbb{A}^{r} \to S \times \mathbb{A}^{r}.$
   \end{theorem}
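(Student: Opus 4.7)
The plan is to deduce the theorem from its absolute analogue Theorem \ref{k-dimension conj} by analysing the long exact sequence (\ref{les}) for the fibre $K(f)$. From (\ref{les}) one extracts, for each $n$, the short exact sequence
\[
0 \to \coker\bigl(K_{-n+1}(S)\to K_{-n+1}(X)\bigr) \to K_{-n}(f) \to \ker\bigl(K_{-n}(S)\to K_{-n}(X)\bigr) \to 0,
\]
so $K_{-n}(f)$ vanishes as soon as both $K_{-n+1}(X)$ and $K_{-n}(S)$ vanish. Theorem \ref{k-dimension conj} gives $K_{-n}(S)=0$ for $n>d$, so the whole problem reduces to showing $K_{-m}(X)=0$ for $m>d$. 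Similarly, the $K_{-n}$-regularity statement for $f$ reduces, via the analogous sequence (\ref{les2}), to showing $N^{r}K_{-m}(X)=0$ for $m\geq d$ together with the corresponding fact on $S$, which is again part of Theorem \ref{k-dimension conj}.

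For the crucial vanishing on $X$, I would use a pro-cdh descent / abstract blow-up reduction in the spirit of Kerz--Strunk--Tamme. By noetherian induction on $d=\dim S$, one may reduce to the case where $S$ is regular. When $S$ is regular, smoothness of $f$ forces $X$ to be regular; hence $K_{n}(X)=0$ for $n<0$ and $X$ is $K$-regular, so both required vanishings on $X$ hold trivially. The induction step uses an abstract blow-up square on $S$ with centre of strictly smaller dimension, pulled back along $f$ (the pullback stays smooth and quasi-projective by base change), together with the pro-cdh descent sequence for non-connective $K$-theory. Plugging the resulting vanishings on $X$ into the short exact sequences above yields $K_{-n}(f)=0$ for $n>d+1$ and $N^{r}K_{-n}(f)=0$ for $n>d$, which is exactly the claimed $K_{-n}$-regularity of $f$.

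The main obstacle is precisely the auxiliary vanishing $K_{-m}(X)=0$ for $m>d$: this is genuinely stronger than what Theorem \ref{k-dimension conj} provides for $X$ alone, since $\dim X$ may exceed $d$ by the relative dimension of $f$. Bridging this gap is where smoothness and quasi-projectivity of $f$ are essential, via the fact that $X$ inherits regularity from a regular base and the fact that base change of an abstract blow-up square along a flat map remains an abstract blow-up square. Handling the boundary indices $n=d+1$ for vanishing and $n=d$ for regularity will also require some care, as at those indices one has to exploit the $K_{-d}$-regularity of $S$ rather than a cruder vanishing of $K_{-d}(S)$.
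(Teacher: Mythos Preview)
The paper does not actually prove this statement: its entire proof is the sentence ``See Theorem~3.8 of \cite{VS}.'' So there is no in-paper argument to compare your proposal against.

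Your reduction via (\ref{les}) and (\ref{les2}) is correct: combined with Theorem~\ref{k-dimension conj} for $S$, the theorem follows once one knows $K_{-m}(X)=0$ for $m>d$ and $N^{r}K_{-m}(X)=0$ for $m\geq d$. You are also right that this auxiliary vanishing is the real content, since $\dim X$ may exceed $d$, and that smoothness of $f$ (so that $X$ is regular whenever $S$ is) together with stability of smooth quasi-projective morphisms under base change are exactly the hypotheses one must exploit.

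The gap is in how you propose to obtain that vanishing on $X$. The sentence ``by noetherian induction on $d=\dim S$, one may reduce to the case where $S$ is regular'' is not a step one can actually perform: in an abstract blow-up square on $S$ the proper part is a blow-up of $S$, not a regular scheme, and resolution of singularities is unavailable in this generality. The argument in \cite{VS}, following \cite{KST}, does not reach a regular base as an intermediate case; it runs noetherian induction on closed subschemes of $S$ and uses pro-cdh descent for (derived) blow-ups to control the blow-up term directly, with the smooth quasi-projective $f$ pulled back throughout. Your outline points at the right toolbox, but the induction scheme as you describe it does not close; replacing it with the genuine KST machinery is precisely the work carried out in \cite{VS}, which the present paper cites rather than reproduces.
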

   \begin{proof}
    See Theorem 3.8 of \cite{VS}.
   \end{proof}

   Let $\mathcal{K}^{\Aa}_{n, zar}$ denote the Zariski sheafification of the presheaf $U\mapsto K_{n}^{\Aa}(U).$ Similarly,  $\mathcal{NK}_{n, zar}^{\Aa}$ is the Zariski sheafification of the presheaf $U\mapsto NK_{n}^{\Aa}(U).$ More generally, one can define $\mathcal{N}^{r}\mathcal{K}_{n, zar}^{\Aa}$ for $r>0.$ 
   
   \begin{lemma}\label{reduced}
     Let $S$ be a Noetherian scheme of dimension $d.$ Let  $\mathcal{A}$ be an Azumaya algebra of rank $q^2$ over $S.$ Then $K_{-n}^{\Aa}(S)\cong K_{-n}^{\Aa}(S_{red})$ and $N^{r}K_{-n}^{\Aa}(S)\cong N^{r}K_{-n}^{\Aa}(S_{red})$ for $n\geq d$ and $r>0.$
   \end{lemma}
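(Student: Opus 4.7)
The plan is to reduce the assertion to the untwisted nilpotent invariance of negative $K$-theory via the generalized Quillen projective bundle formula (\ref{Quillen decomposition for all n}).

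First I would exploit the smoothness of $\rho\colon SB(\Aa)\to S$. Since smoothness is preserved under base change and smooth morphisms pull reduced schemes back to reduced schemes,
$$SB(\Aa)_{red} \;=\; SB(\Aa)\times_{S} S_{red} \;=\; SB(\Aa|_{S_{red}}),$$
and the closed immersion $S_{red}\hookrightarrow S$ lifts to a closed immersion $SB(\Aa)_{red}\hookrightarrow SB(\Aa)$. I would then verify that Quillen's isomorphism (\ref{Quillen decomposition for all n}) is natural with respect to this base change, so that for every $n\in\mathbb{Z}$ the pullback map $K_{n}(SB(\Aa))\to K_{n}(SB(\Aa)_{red})$ is identified with the direct sum $\bigoplus_{i=0}^{q-1}\bigl(K_{n}^{\Aa^{\otimes i}}(S)\to K_{n}^{\Aa^{\otimes i}}(S_{red})\bigr)$ of the twisted restriction maps. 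In non-negative degrees this is immediate from the construction of Quillen's formula (pushforward of the canonical twisted line bundles on $SB(\Aa)$, both of which commute with base change); in negative degrees one extracts it from iterating the Bass diagram chase displayed just above Proposition \ref{reg case}.

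Next I would invoke the following standard fact for ordinary non-connective $K$-theory: for any Noetherian scheme $Y$, $K_{-n}(Y)\cong K_{-n}(Y_{red})$ for all $n\geq 0$. This is proved by reducing to the affine case via Zariski descent and then inducting on $n$ using Bass's fundamental theorem, starting from the classical idempotent-lifting isomorphism $K_{0}(R)\cong K_{0}(R_{red})$; the induction closes because $(R[t])_{red}=R_{red}[t]$ and similarly for $R[t^{-1}]$ and $R[t,t^{-1}]$. Specialising to $Y=SB(\Aa)$ and reading off summands via the natural Quillen decomposition yields $K_{-n}^{\Aa^{\otimes i}}(S)\cong K_{-n}^{\Aa^{\otimes i}}(S_{red})$ for every $i$ and every $n\geq 0$. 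Taking $i=1$ gives the first assertion of the lemma, which in particular holds for $n\geq d$.

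For the $N^{r}$-statement I would apply the preceding to $S\times\mathbb{A}^{r}$ (using $(S\times\mathbb{A}^{r})_{red}=S_{red}\times\mathbb{A}^{r}$) and identify $N^{r}K_{-n}^{\Aa}$ as the natural direct summand of $K_{-n}^{\Aa}((\,-\,)\times\mathbb{A}^{r})$, which is preserved compatibly for $S$ and $S_{red}$. The main obstacle is the base-change naturality of the Quillen decomposition in negative degrees: the isomorphism there is assembled from Bass's fundamental theorem applied simultaneously to $S$ and to $SB(\Aa)$ rather than being read off directly from the geometry of $SB(\Aa)$, so one has to track the compatibility carefully through the diagram chase, though the check is essentially formal.
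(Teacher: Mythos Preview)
Your reduction to untwisted $K$-theory via $SB(\Aa)$ is natural, but the ``standard fact'' you invoke---that $K_{-n}(Y)\cong K_{-n}(Y_{red})$ for every Noetherian scheme $Y$ and every $n\ge 0$---is not true, and the Zariski-descent-plus-Bass argument you sketch does not establish it. The descent spectral sequence $H^{p}_{Zar}(Y,\mathcal{K}_{q,zar})\Rightarrow K_{q-p}(Y)$ feeds the sheaves $\mathcal{K}_{q,zar}$ with $q$ ranging up to $\dim Y$ into $K_{0}(Y)$, and these \emph{positive} $K$-sheaves are not nil-invariant; so the comparison with $Y_{red}$ only goes through once $n\ge\dim Y$. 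For a concrete failure at $n=0$, take $Y=E\times_{k}\operatorname{Spec}k[\epsilon]$ with $E/k$ an elliptic curve: then $\operatorname{Pic}(Y)\cong\operatorname{Pic}(E)\oplus H^{1}(E,\mathcal{O}_{E})$, and any nontrivial line bundle $\mathcal{L}$ in the $H^{1}$-summand gives $[\mathcal{L}]-[\mathcal{O}_{Y}]\ne 0$ in $K_{0}(Y)$ (detected by the determinant map $K_{0}(Y)\to\operatorname{Pic}(Y)$) while its image in $K_{0}(Y_{red})=K_{0}(E)$ vanishes.

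Consequently the only range your route actually delivers is $n\ge\dim SB(\Aa)=d+q-1$, strictly weaker than the $n\ge d$ asserted in the lemma and required in the proof of Theorem~\ref{twisted version of weibel's conj}. The paper avoids this loss of $q-1$ by never passing to $SB(\Aa)$ at this step: it cites Stapleton's affine nil-invariance $K^{\Aa}_{-n}(R)\cong K^{\Aa}_{-n}(R_{red})$ for $n\ge 0$ directly for the twisted theory, and then compares the Zariski descent spectral sequences for the twisted sheaves $\mathcal{K}^{\Aa}_{-q,zar}$ on $S$ and on $S_{red}$. Since $S$ itself has cohomological dimension $d$ (rather than $d+q-1$), this comparison succeeds in the full range $n\ge d$.
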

\begin{proof}
 Given a commutative ring $R,$ $K_{-n}^{\Aa}(R)\cong K_{-n}^{\Aa}(R_{red})$ for $n\geq 0$ (see Proposition 2.7 of \cite{stap}). Note $(R[t])_{red}=R_{red}[t].$ Thus,  $N^{r}K_{-n}^{\Aa}(R)\cong N^{r}K_{-n}^{\Aa}(R_{red})$ for $n\geq 0$ and $r>0.$ The rest of the argument is based on comparing Zariski descent spectral sequences for $S$ and $S_{red}$ (see Corollary 2.8 of \cite{stap} and Lemma 3.4 of \cite{VS}).
\end{proof}

   \begin{theorem}\label{twisted version of weibel's conj}
    Let $S$ be a Noetherian scheme of dimension $d.$ Let  $\mathcal{A}$ be an Azumaya algebra of rank $q^2$ over $S.$ Then 
 \begin{enumerate}
  \item $K_{-n}^{\Aa}(S)=0$ for $n>d;$
  \item $S$ is $K_{-n}^{\Aa}$-regular for $n\geq d$, i.e., the natural map $K_{-n}^{\Aa}(S) \to K_{-n}^{\Aa}(S \times \mathbb{A}^{r})$ is an isomorphism for $n\geq d$ and $r\geq 0.$
 \end{enumerate}
   \end{theorem}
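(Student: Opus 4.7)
The plan is to reduce the statement to the relative version of Weibel's conjecture (Theorem \ref{smooth quasiprojective}) applied to the smooth projective structure map $\rho \colon SB(\Aa) \to S$. By Lemma \ref{reduced}, both $K^{\Aa}_{-n}(S)$ and $N^{r} K^{\Aa}_{-n}(S)$ are unchanged after replacing $S$ by $S_{red}$ in the relevant range, so I may assume $S$ is reduced, which is precisely the hypothesis required by Theorem \ref{smooth quasiprojective}.

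Next, I exploit the functoriality of (\ref{Quillen decomposition for all n}): the pullback $\rho^{*} \colon K_{n}(S) \to K_{n}(SB(\Aa))$ is identified with the inclusion of the $i = 0$ summand in Quillen's decomposition, so it is split injective with cokernel $\bigoplus_{i=1}^{q-1} K_{n}^{\Aa^{\otimes i}}(S)$. Since the decomposition is natural in $S$, the splitting also persists after base change to $S \times \mathbb{A}^{r}$, hence for $N^{r} K_{*}$. Feeding the split injectivity into the long exact sequences (\ref{les}) and (\ref{les2}) forces the connecting map $K_{n}(\rho) \to K_{n}(S)$ to vanish, so both sequences break into short exact pieces and yield natural isomorphisms
\begin{equation*}
K_{n}(\rho) \;\cong\; \bigoplus_{i=1}^{q-1} K_{n+1}^{\Aa^{\otimes i}}(S), \qquad N^{r} K_{n}(\rho) \;\cong\; \bigoplus_{i=1}^{q-1} N^{r} K_{n+1}^{\Aa^{\otimes i}}(S)
\end{equation*}
for every $n \in \mathbb{Z}$ and $r > 0$.

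Since $\rho$ is smooth and projective (hence quasi-projective), Theorem \ref{smooth quasiprojective} supplies $K_{-m}(\rho) = 0$ for $m > d+1$ and $N^{r} K_{-m}(\rho) = 0$ for $m > d$, $r > 0$. Translating through the isomorphisms above via the index shift $n = m - 1$, I obtain $K^{\Aa^{\otimes i}}_{-n}(S) = 0$ for $n > d$ and $N^{r} K^{\Aa^{\otimes i}}_{-n}(S) = 0$ for $n \geq d$, simultaneously for each $i = 1, \dots, q-1$. Specializing to $i = 1$ yields (1) and (2); the $-1$ shift built into the Severi--Brauer fibration is exactly what converts the relative range $m > d$ into the absolute range $n \geq d$, thereby capturing the boundary case $n = d$ absent from Stapleton's treatment.

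The main subtlety is to verify that the Quillen splitting is compatible enough with the Bass fundamental sequence for the identification of $K_{n}(\rho)$ to hold uniformly in all integer degrees and to survive under polynomial extensions; once this compatibility is established, the rest is a bookkeeping exercise with the degree shift.
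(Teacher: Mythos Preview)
Your argument is correct and uses the same core ingredients as the paper: the reduction to $S_{red}$ via Lemma~\ref{reduced}, the relative Weibel result Theorem~\ref{smooth quasiprojective} applied to $\rho\colon SB(\Aa)\to S$, the long exact sequences (\ref{les}) and (\ref{les2}), and Quillen's decomposition (\ref{Quillen decomposition for all n}).

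The organizational difference is worth recording. The paper first uses Theorem~\ref{smooth quasiprojective} together with the long exact sequence to show that $K_{-n}(S)\to K_{-n}(SB(\Aa))$ is surjective (respectively an isomorphism) in the relevant range, then invokes the absolute Weibel conjecture Theorem~\ref{k-dimension conj} to kill $K_{-n}(S)$ and $N^{r}K_{-n}(S)$, hence $K_{-n}(SB(\Aa))$ and $N^{r}K_{-n}(SB(\Aa))$, and only at the end applies Quillen's decomposition to extract the twisted summands. You instead use the naturality of Quillen's splitting up front to force the connecting maps $K_{n}(\rho)\to K_{n}(S)$ to vanish, obtaining $K_{m}(\rho)\cong\bigoplus_{i=1}^{q-1}K_{m+1}^{\Aa^{\otimes i}}(S)$ (and likewise for $N^{r}$); then the vanishing of $K_{-m}(\rho)$ and $N^{r}K_{-m}(\rho)$ from Theorem~\ref{smooth quasiprojective} gives the conclusion directly. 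Your route is marginally cleaner in that it never appeals to Theorem~\ref{k-dimension conj} explicitly (of course that result is already buried inside the proof of Theorem~\ref{smooth quasiprojective}), and the degree shift in the fiber sequence transparently explains why the boundary case $n=d$ is captured. The paper's route has the mild advantage that it establishes the stronger intermediate statement $K_{-n}(SB(\Aa))=0$ for $n>d$, which may be of independent interest.
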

\begin{proof}
We may assume that $S$ is reduced (see Lemma \ref{reduced}). Let $SB(\Aa)$ be the associated Severi-Brauer variety of relative dimension $q-1$ over $S.$ Note $\rho: SB(\Aa) \to S$ is a smooth, projective morphism (hence also finite type). Since $S$ is Noetherian, so is $SB(\Aa).$ Then 
$K_{-n}(\rho)=0$ for $n>d+1$ and $N^{r}K_{-n}(\rho)=0$ for all $r\geq 0$ and $n>d$ by Theorem \ref{smooth quasiprojective}. The sequence (\ref{les}) implies that $K_{-d-1}(S) \to K_{-d-1}(SB(\Aa))$ is surjective and $K_{-n}(S) \to K_{-n}(SB(\Aa))$ is an isomorphism for $n>d+1.$ Similarly, the sequence (\ref{les2}) implies that $N^{r}K_{-d}(S) \to N^{r}K_{-d}(SB(\Aa))$ is surjective and $N^{r}K_{-n}(S) \to N^{r}K_{-n}(SB(\Aa))$ is an isomorphism for $n>d.$ By Theorem \ref{k-dimension conj}, we get $K_{-n}(SB(\Aa))=0$ for $n> d$ and $N^{r}K_{-n}(SB(\Aa))=0$ for $n\geq d.$ The natural decomposition (\ref{Quillen decomposition for all n}) yields the result.
\end{proof}

 \section{Twisted $K$-theory of weakly regular stably coherent rings} \label{twisted val}

 Let $R$ be a commutative ring. A finitely generated $R$-module $M$ is called {\it coherent} if every finitely generated submodule of $M$ is finitely presented. The ring $R$ is {\it coherent} if it is a coherent module over itself, i.e., every finitely generated ideal of $R$ is finitely presented. The ring $R$ is said to be a {\it regular} ring if every finitely generated ideal of $R$ has finite projective dimension.
 
 Let $M$ be a $R$-module. The {\it weak dimension} of $M,$ denoted by $w.dim_{R}M$ is the least nonnegative integer $n,$ for which there is an exact sequence

$$ 0 \to F_n \to \dots \to F_1 \to F_0 \to M \to 0 $$ with $F_i$ flat over $R.$ The {\it weak dimension} of a ring $R,$ denoted $w.dim (R),$ is defined by  $w.dim (R)= Sup\{w.dim_{R}M| M ~{\rm is~ a~} R{\rm -module}\}.$ If $R$ is a coherent ring then $w.dim (R)= Sup\{Pd_{R}M| M ~{\rm is~ a~ finitely~ presented~}  R{\rm -module}\}$ (see Corollary 2.5.6 of \cite{Glaz}). Clearly, $Pd_{R}(M)\leq w.dim(R)$ for all finitely presented $R$-modules $M.$ We also have $w.dim(R_{\mathfrak{p}})\leq w.dim(R)$ for all $\mathfrak{p}\in Spec(R)$ (see Theorem 1.3.13 of \cite{Glaz}).

A coherent ring $R$ is called {\it weakly regular} if $R$ has finite flat (or weak) dimension. A ring $R$ is said to be {\it stably coherent} if every finitely presented $R$-algebra is coherent.

\begin{example}\label{example of rings}
 {\rm Here is a list of weakly regular stably coherent rings:
 \begin{enumerate}
 \item Noetherian regular local rings of finite krull dimension. In this case, global dimension coincides with weak dimension.
 \item Valuation rings (see Proposition 2.1 of \cite{AMM});
 \item Pr\"{u}fer domains (see Lemma 3.1 of \cite{BS} and P.25 of \cite{Glaz}).
\end{enumerate}}
\end{example}

 Let $\mathcal{K}_{n, zar}$ denote the Zariski sheafification of the presheaf $U\mapsto K_{n}(U).$ Similarly,  $\mathcal{NK}_{n, zar}$ is the Zariski sheafification of the presheaf $U\mapsto NK_{n}(U).$ More generally, one can define $\mathcal{N}^{r}\mathcal{K}_{n, zar}$ for $r>0.$

\begin{lemma}\label{red}
 Let $X$ be a scheme and $X\to Spec(R)$ be a  smooth map with $R$ weakly regular stably coherent. Then the Zariski sheaves on $X,$ $\mathcal{K}_{n, zar}=0$ for $n< 0$ and $\mathcal{N}^{r}\mathcal{K}_{n, zar}=0$ for $n \in \mathbb{Z},$  $r>0.$ 
 \end{lemma}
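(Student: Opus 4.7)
The plan is to reduce the vanishing of these Zariski sheaves to statements about the $K$-theory of the stalks of $\mathcal{O}_X$, and then invoke the vanishing of negative $K$-theory and homotopy invariance for weakly regular stably coherent local rings. A Zariski sheaf on $X$ vanishes iff each of its stalks vanishes. Since algebraic $K$-theory commutes with filtered colimits of commutative rings, so do the functors $K_n$ and $N^{r}K_n$; hence the stalk of $\mathcal{K}_{n,zar}$ at a point $x\in X$ is $K_n(\mathcal{O}_{X,x})$ and the stalk of $\mathcal{N}^{r}\mathcal{K}_{n,zar}$ at $x$ is $N^{r}K_n(\mathcal{O}_{X,x})$. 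It therefore suffices to show that $K_n(\mathcal{O}_{X,x})=0$ for $n<0$ and $N^{r}K_n(\mathcal{O}_{X,x})=0$ for all $n\in\mathbb{Z}$ and $r>0$, at every point $x\in X$.

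I would next verify that each stalk $\mathcal{O}_{X,x}$ is itself a weakly regular stably coherent local ring. Since $X\to\mathrm{Spec}(R)$ is smooth, $\mathcal{O}_{X,x}$ is the localization at a prime of a finitely presented smooth $R$-algebra $A$. Stable coherence of $R$ gives coherence of $A$, and coherence passes to localizations, so $\mathcal{O}_{X,x}$ is coherent; in fact $A$, and hence $\mathcal{O}_{X,x}$, is again stably coherent because every finitely presented $A$-algebra is finitely presented over $R$. For weak regularity, smoothness of $A$ over $R$ makes $A$ flat of bounded relative Tor-dimension over $R$, and a standard change-of-rings spectral sequence bounds $w.dim(A)$ by $w.dim(R)$ plus the relative dimension; localization can only decrease weak dimension (see Theorem 1.3.13 of \cite{Glaz}). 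So each $\mathcal{O}_{X,x}$ lies in the class for which the desired $K$-theoretic vanishing is expected.

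Finally, I would apply the statement that for a weakly regular stably coherent local ring $B$, one has $K_n(B)=0$ for $n<0$ and $K_n(B)\cong K_n(B[t_1,\dots,t_r])$ for all $n\in\mathbb{Z}$ and $r\geq 0$. This is \cite[Theorem~3.3]{KM} for valuation rings and \cite[Theorem~1.1]{BS} for Pr\"{u}fer domains, and Example~\ref{example of rings} signals that the argument is uniform: finite weak dimension lets every finitely presented $B$-module be resolved by a bounded complex of flats, so $K$-theory agrees with $G$-theory on the category of finitely presented modules, and both the negative vanishing and homotopy invariance then reduce to the case of regular local rings via the analogue of Quillen's resolution theorem. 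The main obstacle is checking that the Kelly--Morrow/Banerjee--Sadhu strategy really requires only weak regularity and stable coherence of the local ring---the latter being what keeps $B[t_1,\dots,t_r]$ coherent so that the resolution theorem can be applied there as well---and not any specific valuation-theoretic structure.
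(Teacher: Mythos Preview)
Your approach is essentially the same as the paper's: reduce to stalks, show each $\mathcal{O}_{X,x}$ is weakly regular stably coherent, and then invoke the $K$-theoretic vanishing/homotopy invariance for such rings. The two places where you hedge are handled in the paper by direct citation to Antieau--Mathew--Morrow \cite{AMM}: their Corollary~2.3 gives that any smooth algebra over a weakly regular stably coherent ring is again weakly regular stably coherent (so you need not run the change-of-rings spectral sequence yourself), and their Proposition~2.4 is precisely the input you flag as the ``main obstacle'' in your last paragraph---it proves vanishing of negative $K$-theory and homotopy invariance for \emph{any} weakly regular stably coherent ring, with no valuation-theoretic hypothesis and no locality assumption, so there is nothing further to check.
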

\begin{proof}
 Let $Spec(A)\hookrightarrow X$ be an affine open subset. Then $Spec(A) \to Spec(R)$ is smooth. Note $A$ is weakly regular stably coherent (see Corollary 2.3 of \cite{AMM}). Since any localization of stably coherent ring is stably coherent and $w.dim(A_{\mathfrak{p}})\leq w.dim(A)< \infty,$ $A_{\mathfrak{p}}$ is weakly regular stably coherent for all $\mathfrak{p}\in Spec(A).$ 
 Each stalk of $\mathcal{K}_{n, zar}$ and $\mathcal{N}^{r}\mathcal{K}_{n, zar}$ are $K_{n}( A_{\mathfrak{p}})$ and $N^{r}K_{n}(A_{\mathfrak{p}}),$ where $A_{\mathfrak{p}}$ is  weakly regular stably coherent. By Proposition 2.4 of \cite{AMM}, algebraic $K$-theory is homotopy invariant, and negative $K$-groups vanish for weakly regular stably coherent rings. Hence the assertion.   
\end{proof}

\begin{lemma}\label{qcqs gen}
  Let $X \to S$ be a projective morphism with $S$ quasi-compact and quasi separated. Then $X$ is also quasi-compact and quasi-separated.
\end{lemma}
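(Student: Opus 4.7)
The plan is to unpack the definition of a projective morphism and chain together standard facts about qcqs schemes and morphisms. A projective morphism $f\colon X\to S$ by definition factors through a closed immersion $i\colon X\hookrightarrow \mathbb{P}^n_S$ for some $n$, followed by the structural projection $p\colon \mathbb{P}^n_S\to S$. Both $i$ and $p$ are proper (closed immersions are proper, and the projection from projective space over any base is proper by base change from the absolute case), so $f$ itself is proper; in particular, $f$ is quasi-compact and separated.

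For quasi-compactness of $X$, I would simply note that $X=f^{-1}(S)$, and the preimage of a quasi-compact open under a quasi-compact morphism is quasi-compact; since $S$ itself is quasi-compact by hypothesis, $X$ is quasi-compact as a topological space, i.e., as a scheme.

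For quasi-separatedness, the plan is to view $X$ as a scheme over $\mathrm{Spec}(\mathbb{Z})$ via the composition $X\to S\to \mathrm{Spec}(\mathbb{Z})$. Since $f$ is separated (hence quasi-separated) and the structure morphism $S\to \mathrm{Spec}(\mathbb{Z})$ is quasi-separated by hypothesis on $S$, the composition is quasi-separated, because quasi-separatedness is stable under composition. Equivalently, the diagonal $X\to X\times_{\mathbb{Z}} X$ is quasi-compact, which is what it means for $X$ to be a quasi-separated scheme.

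There is no genuine obstacle here; the statement is essentially a book-keeping exercise assembling elementary properties of proper morphisms together with the stability of (quasi-)compactness and (quasi-)separatedness under composition. The only mild subtlety is making sure one cites the correct direction: one needs that if $f\colon X\to S$ is a qcqs morphism and $S$ is qcqs as a scheme, then $X$ is qcqs as a scheme, which follows by factoring the structure morphism $X\to \mathrm{Spec}(\mathbb{Z})$ through $S$.
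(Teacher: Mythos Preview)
Your proposal is correct and follows essentially the same approach as the paper: the paper's proof simply states that projective morphisms are quasi-compact and quasi-separated, and that since $S$ is qcqs, so is $X$. You have merely unpacked these two assertions in more detail (factoring through $\mathbb{P}^n_S$ to see properness, and composing with $S\to\mathrm{Spec}(\mathbb{Z})$ to transfer qcqs from the morphism to the scheme), which is fine.
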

\begin{proof}
 Projective morphisms are always quasi-compact and quasi separated morphism. Since $S$ quasi-compact and quasi separated scheme, so is $X.$ 
\end{proof}

\begin{theorem}\label{gen case}
 Let $X$ be a finite dimensional quasi-compact and quasi-separated scheme and  $X\to Spec(R)$ be a  smooth map with $R$ weakly regular stably coherent. Then
 
 \begin{enumerate}
  \item $K_{-n}(X)=0$ for $n>d$ and $H_{Zar}^{d}(X, \mathbb{Z})\cong K_{-d}(X),$ where $d=\dim(X);$
  \item The natural map $K_{n}(X) \to K_{n}(X \times \mathbb{A}^{r})$ is an isomorphism for $n\in \mathbb{Z}$ and $r\geq 0.$
  \end{enumerate}
  \end{theorem}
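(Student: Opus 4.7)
The plan is to apply a Zariski descent spectral sequence for Bass non-connective $K$-theory and reduce both statements to the sheaf-theoretic vanishing supplied by Lemma \ref{red}. By Lemma \ref{qcqs gen}, $X$ is quasi-compact and quasi-separated, and since $X$ is of finite type over the finite-dimensional ring $R$, $X$ itself has finite Krull dimension $d$. Zariski descent for the Bass $K$-theory spectrum, as established in the proof of Weibel's conjecture by Kerz--Strunk--Tamme, yields a strongly convergent spectral sequence
\begin{equation*}
E_2^{p,q} = H^p_{Zar}(X, \mathcal{K}_{-q, zar}) \Rightarrow K_{-p-q}(X),
\end{equation*}
together with the cohomological dimension bound $H^p_{Zar}(X, \mathcal{F}) = 0$ for $p > d$ available on any qcqs scheme of Krull dimension $d$ in this setting.

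For part (1), Lemma \ref{red} gives $\mathcal{K}_{-q, zar} = 0$ for $q > 0$, so the nonzero $E_2$-entries are confined to the strip $0 \leq p \leq d$, $q \leq 0$. For $n > d$ the equation $p + q = n$ has no solutions inside this strip, whence $K_{-n}(X) = 0$. In the boundary case $n = d$ the only surviving contribution is $E_2^{d, 0}$, and the shape of the vanishing region kills every differential $d_r$ ($r \geq 2$) coming into or going out of this position, so $K_{-d}(X) \cong E_2^{d, 0}$. Since every stalk $\mathcal{O}_{X, x}$ is a local ring with $K_0(\mathcal{O}_{X,x}) \cong \mathbb{Z}$, the sheaf $\mathcal{K}_{0, zar}$ is naturally isomorphic to the constant sheaf $\mathbb{Z}$, yielding the desired identification $K_{-d}(X) \cong H^d_{Zar}(X, \mathbb{Z})$.

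For part (2), I would run the analogous descent spectral sequence for $N^r K$:
\begin{equation*}
E_2^{p,q} = H^p_{Zar}(X, \mathcal{N}^r \mathcal{K}_{-q, zar}) \Rightarrow N^r K_{-p-q}(X).
\end{equation*}
By Lemma \ref{red}, every coefficient sheaf $\mathcal{N}^r \mathcal{K}_{n, zar}$ vanishes for $r > 0$ and all $n \in \mathbb{Z}$, so the $E_2$-page is identically zero and $N^r K_n(X) = 0$ for every $n \in \mathbb{Z}$ and $r > 0$. Combined with the canonical decomposition $K_n(X \times \mathbb{A}^r) \cong (1 + N)^r K_n(X)$, this is exactly the asserted $K_n$-regularity.

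The main obstacle is that $X$ is not assumed Noetherian (only qcqs of finite Krull dimension, since $R$ is only weakly regular stably coherent). I therefore expect to rely explicitly on the non-Noetherian descent framework of Kerz--Strunk--Tamme, both for strong convergence of the two spectral sequences and for the cohomological dimension bound $H^p_{Zar}(X, -) = 0$ in degrees above $d$. Once these inputs are in place, the identification of the $E_2$-page and the ensuing conclusions are essentially formal.
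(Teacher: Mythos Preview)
Your argument is essentially the same as the paper's: both run the Zariski descent spectral sequence for Bass $K$-theory (and its $N^r$ variant) on the qcqs finite-dimensional scheme $X$, invoke Lemma \ref{red} to kill the relevant coefficient sheaves, and bound the cohomological dimension by $d=\dim X$. The only difference is bibliographic: the paper cites Rosenschon--{\O}stv{\ae}r and Clausen--Mathew for the descent spectral sequence and Scheiderer for the cohomological dimension bound in the qcqs non-Noetherian setting, rather than Kerz--Strunk--Tamme (whose main results are stated for Noetherian schemes), so you should adjust your references accordingly.
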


\begin{proof}
(1)  The scheme $X$ has finite Krull dimension $d.$ We have a descent spectral sequence (see Theorem 4.1 of \cite{RP} and Remark 3.3.1 of \cite{CM})
 $$H_{Zar}^{p}(X, \mathcal{K}_{n, Zar})\Longrightarrow K_{n-p}(X).$$ 
 Here $\mathcal{K}_{n, Zar}$ is the Zariski sheaf on $X.$ By Corollary 4.6 of \cite{CS}, $X_{Zar}$ has cohomological dimension at most $d=\dim(X).$ Moreover, $\mathcal{K}_{n, Zar}=0$ for $n< 0$ (see Lemma \ref{red}). This implies that $K_{-n}(X)=0$ for $n> d$ and $H_{Zar}^{d}(X, \mathbb{Z})=K_{-d}(X).$  
 
 (2) Consider the Zariski sheaf $\mathcal{N}^{r}\mathcal{K}_{n, Zar}$ on $X.$  We have $\mathcal{N}^{r}\mathcal{K}_{n, Zar}=0$ for $n \in \mathbb{Z},$ $r>0$ (see Lemma \ref{red}). The following descent spectral sequence 
 
 $$H_{Zar}^{p}(X, \mathcal{N}^{r}\mathcal{K}_{n, Zar})\Longrightarrow N^{r}K_{n-p}(X)$$ implies $N^{r}K_{n}(X)=0$ for $n \in \mathbb{Z},$ $r>0.$ 
\end{proof}

\begin{corollary}\label{cor of gen case}
 Let $R$ be a finite dimensional weakly regular stably coherent ring. Let $\Aa$ be an Azumaya algebra over $R$ of rank $q^2$ and $SB(\Aa)$ be the associated Severi Brauer variety. Then
  \begin{enumerate}
  
  \item $K_{-n}^{\Aa}(R)=0$ for $n> \dim (SB(\Aa)).$ 
  
   \item the natural map $K_{n}^{\Aa}(R) \to K_{n}^{\Aa}(R[t_1, t_2, \dots, t_r])$ is an isomorphism for all $n\in \mathbb{Z}$ and $r\geq 0.$

  \end{enumerate} 
\end{corollary}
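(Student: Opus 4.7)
The plan is to deduce both parts of the corollary directly from Theorem \ref{gen case} applied to the Severi-Brauer variety $SB(\Aa)$, combined with Quillen's generalized projective bundle decomposition (\ref{Quillen decomposition for all n}). The first thing I would check is that $SB(\Aa) \to \mathrm{Spec}(R)$ is smooth and projective of relative dimension $q-1$, so Theorem \ref{gen case} applies with $X = SB(\Aa)$. This immediately yields $K_{-n}(SB(\Aa)) = 0$ for $n > \dim(SB(\Aa))$ and $K_n(SB(\Aa)) \cong K_n(SB(\Aa) \times \mathbb{A}^r)$ for every $n \in \mathbb{Z}$ and $r \geq 0$.

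For part (1), I would invoke (\ref{Quillen decomposition for all n}) in the form
\[
K_{-n}(SB(\Aa)) \;\cong\; \bigoplus_{i=0}^{q-1} K_{-n}^{\Aa^{\otimes i}}(R),
\]
which exhibits $K_{-n}^{\Aa}(R)$ (the $i=1$ summand) as a direct summand of a group that vanishes for $n > \dim(SB(\Aa))$. Hence $K_{-n}^{\Aa}(R)=0$ in that range.

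For part (2), the key observation is that the formation of Severi-Brauer varieties is compatible with base change: $SB(\Aa)[t_1,\dots,t_r] \cong SB(\Aa \otimes_R R[t_1,\dots,t_r])$, and the Azumaya algebra of that base-changed variety is $(\Aa \otimes_R R[t_1,\dots,t_r])^{\otimes i}$. Naturality of (\ref{Quillen decomposition for all n}) with respect to the flat map $R \to R[t_1,\dots,t_r]$ therefore splits the map $K_n(SB(\Aa)) \to K_n(SB(\Aa)[t_1,\dots,t_r])$ as a direct sum $\bigoplus_{i=0}^{q-1}\bigl(K_n^{\Aa^{\otimes i}}(R) \to K_n^{\Aa^{\otimes i}}(R[t_1,\dots,t_r])\bigr)$. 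Since the total map is an isomorphism by Theorem \ref{gen case}(2), each summand is an isomorphism as well; taking $i=1$ gives the desired statement.

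The proof is essentially a formal bookkeeping exercise once Theorem \ref{gen case} is in hand, so I do not foresee a genuine obstacle. The only point requiring a line of justification is the naturality of the Quillen decomposition under the base change $R \to R[t_1,\dots,t_r]$, which follows from the fact that the isomorphism in (\ref{Quillen decomposition for all n}) is induced by pullback along a map of schemes and is compatible with flat pullback.
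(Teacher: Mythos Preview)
Your proposal is correct and follows exactly the paper's approach: the paper's proof simply notes that $SB(\Aa)\to\operatorname{Spec}(R)$ is smooth and projective, then invokes Theorem~\ref{gen case} together with the decomposition~(\ref{Quillen decomposition for all n}). Your write-up just makes explicit the two points the paper leaves implicit, namely that $K_{-n}^{\Aa}(R)$ sits as a direct summand and that the decomposition is natural in the base, so there is nothing to add.
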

\begin{proof}
The structure morphism $SB(\mathcal{A}) \to Spec(R)$ is smooth and projective (hence of finite type). The Severi Brauer variety $SB(\mathcal{A})$ has finite Krull dimension because $R$ is finite dimensional. By Lemma \ref{qcqs gen}, $SB(\mathcal{A})$ is a quasi-compact and quasi-separated scheme. The result now follows from Theorem \ref{gen case} and the decomposition (\ref{Quillen decomposition for all n}).
\end{proof}

\section{An observation}\label{k vs KA}
Let $\Aa$ and $\mathcal{B}$ be Azumaya algebras over a scheme $X.$ Assume that $\varphi: \mathcal{B} \to \Aa$ is an $\mathcal{O}_{X}$-algebra homomorphism and $\Aa$ is a flat $\mathcal{B}$-module. Then the functor $$-\otimes_{\mathcal{B}} {\Aa}: {\bf{Vect}^{\mathcal{B}}}(X)  \to {\bf{Vect}^{\mathcal{A}}}(X), P\mapsto P\otimes_ {\mathcal{B}} {\Aa}$$ is exact and it induces a group homomorphism $\varphi_{n}: K_{n}^{\mathcal{B}}(X) \to K_{n}^{\Aa}(X)$ for each $n\geq 0.$ 
 We also have a restriction functor $res_{\mathcal{B}}^{\Aa}: {\bf{Vect}^{\mathcal{A}}}(X) \to {\bf{Vect}^{\mathcal{B}}}(X),$ which is exact. It induces a group homomorphism $\phi_{n}: K_{n}^{\mathcal{A}}(X) \to K_{n}^{\mathcal{B}}(X)$ for each $n\geq 0.$
 
 If $\mathcal{B}=\mathcal{O}_{X}$ then $\Aa$ is a flat $\mathcal{O}_{X}$-module and $K_{n}^{\mathcal{O}_{X}}(X)=K_{n}(X)$. For $n \geq 0,$ we get group homomorphisms 
 $$\varphi_{n}: K_{n}(X) \to K_{n}^{\Aa}(X)$$ and
 
 $$\phi_{n}: K_{n}^{\Aa}(X) \to K_{n}(X).$$
 
 The composition  $\phi_{n} \varphi_{n}: K_{n}(X) \to K_{n}(X)$ is a map multiplication by $[\Aa]\in K_{0}(X).$
 \begin{theorem}\label{injects}
  Let $V$ be a valuation ring of characteristic $p>0.$ Let $\Aa$ be a Azumaya algebra over $V$ of rank $q^2,$ where $q=p^r$ for some $r \geq 1.$  Then the map $\varphi_{n}: K_{n}(V) \to K_{n}^{\Aa}(V)$ is injective for all $n\geq 0.$
 \end{theorem}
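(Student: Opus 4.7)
The plan is to exploit the factorization recorded just above the theorem statement: the composition $\phi_{n}\circ \varphi_{n}\colon K_n(V)\to K_n(V)$ is multiplication by $[\Aa]\in K_0(V)$. Since $V$ is local, every finitely generated projective $V$-module is free, so $K_0(V)\cong \Zb$ is generated by the class of $V$, and under this identification $[\Aa]$ is the $V$-rank of $\Aa$, namely $q^{2}=p^{2r}$. Hence $\phi_{n}\circ \varphi_{n}$ is multiplication by $p^{2r}$ on $K_n(V)$, and the injectivity of $\varphi_{n}$ reduces to the injectivity of multiplication by $p^{2r}$ on $K_n(V)$, equivalently (since $\mathbb{Z}$-modules with no $p$-torsion have no $p^{k}$-torsion for any $k$) to the statement that $K_n(V)$ has no $p$-torsion.

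So the theorem ultimately comes down to showing that $K_n(V)$ is $p$-torsion-free for a valuation ring $V$ of characteristic $p$, and this is where the hypothesis on the characteristic becomes essential. I would invoke this as a black box from the work of Kelly--Morrow \cite{KM} on the $K$-theory of valuation rings in positive characteristic, which extends the Geisser--Levine identification of mod-$p$ $K$-theory with logarithmic de Rham--Witt groups from fields to valuation rings. A standard Bockstein argument applied to the short exact sequence $0\to K_n(V)\xrightarrow{p}K_n(V)\to K_n(V;\Zb/p)\to 0$ then rules out $p$-torsion in integral $K$-theory.

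The main obstacle is exactly this $p$-torsion-freeness; everything else is a short computation. Once it is in place the write-up consists of four clean steps: (i) recall the composition formula $\phi_{n}\varphi_{n}=\cdot[\Aa]$; (ii) compute $[\Aa]=q^{2}=p^{2r}$ in $K_0(V)=\Zb$ using that $V$ is local; (iii) cite $p$-torsion-freeness of $K_n(V)$ in the characteristic $p$ valuation ring setting; (iv) conclude that $\ker\varphi_{n}$ is contained in the $p^{2r}$-torsion of $K_n(V)$, which vanishes. The hypothesis $q=p^{r}$ is used only at step (ii)--(iii) to guarantee that $[\Aa]$ is a power of the characteristic, so that the torsion-freeness statement applies directly.
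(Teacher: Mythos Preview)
Your proof is correct and matches the paper's argument essentially line for line: both use that $\phi_n\varphi_n$ is multiplication by $[\Aa]=q^2=p^{2r}$ (via locality of $V$), then invoke the $p$-torsion-freeness of $K_n(V)$ from Kelly--Morrow to conclude that $\ker\varphi_n=0$. The only difference is cosmetic: the paper phrases step (iv) as ``$q^2\cdot\ker(\varphi_n)=0$ and $\ker(\varphi_n)$ is $p$-torsion-free'' rather than ``multiplication by $p^{2r}$ is injective,'' and it cites the $p$-torsion-freeness as a black box without your extra Bockstein/Geisser--Levine commentary.
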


 \begin{proof}
    We have $[\Aa].\ker(\varphi_{n})=0$ for $n\geq 0$ (see the above discussion or Proposition 2 of \cite{HH}). Since $V$ is local, $\Aa$ is free over $V$ of rank $q^2.$ Thus, $q^2.\ker(\varphi_{n})=0.$ On the otherhand,  $K_{n}(V)$ is $p$-torsion free for $n\geq 0$ (see Theorem 1.1 of \cite{KM}). So, $\ker(\varphi_{n})$ is also $p$-torsion free for $n\geq 0$. This forces that $\ker(\varphi_{n})=0$ for $n\geq 0.$  Hence the assertion.
 \end{proof}


\begin{thebibliography}{CS79}
 
 
  
  \bibitem{AMM}D.~B. Antieau, A. Mathew and M. Morrow, The K-theory of perfectoid rings, Doc. Math. {\bf 27} (2022), 1923--1952; MR4574230, Zbl 1509.19003. 
 


\bibitem{BS} S. Banerjee and V. Sadhu, $K$-theory of Pr\"ufer domains, Arch. Math. (Basel) {\bf 118} (2022), no.~5, 465--470; MR4418236, Zbl 1485.14019.

\bibitem {b}  H. Bass, {\it Algebraic $K$-theory}, W. A. Benjamin, Inc., New York-Amsterdam, 1968; MR0249491, Zbl 0109.41601.

\bibitem{cal}A. C\u ald\u araru, {\it Derived categories of twisted sheaves on Calabi-Yau manifolds}, ProQuest LLC, Ann Arbor, MI, 2000; MR2700538.



\bibitem{CM} D. Clausen and A. Mathew, Hyperdescent and \'etale $K$-theory, Invent. Math. {\bf 225} (2021), no.~3, 981--1076; MR4296353, Zbl 1556.14025.

 \bibitem{CHSW} G. Cortinas, C. Haesemeyer, M. Schlichting and C, Weibel, Cyclic homology, cdh-cohomology and negative $K$-theory, Ann. of Math. (2) {\bf 167} (2008), no.~2, 549--573; MR2415380, Zbl 1191.19003.
 
 \bibitem{GH} T.~H. Geisser and L. Hesselholt, On the vanishing of negative $K$-groups, Math. Ann. {\bf 348} (2010), no.~3, 707--736; MR2677901, Zbl 1203.19001.

 \bibitem{Glaz}  S. Glaz, {\it Commutative coherent rings}, Lecture Notes in Mathematics, 1371, Springer, Berlin, 1989; MR0999133, Zbl 0745.13004.
 
 
  
  \bibitem{HH} R. Hazrat and R.~T. Hoobler, $K$-theory of Azumaya algebras over schemes, Comm. Algebra {\bf 41} (2013), no.~4, 1268--1277; MR3044406, Zbl 1300.14023.
 
 
  \bibitem{SK}  S. Kelly, Vanishing of negative $K$-theory in positive characteristic, Compos. Math. {\bf 150} (2014), no.~8, 1425--1434; MR3252025, Zbl 1301.19001.
 
 \bibitem{KM} S. Kelly and M. Morrow, $K$-theory of valuation rings, Compos. Math. {\bf 157} (2021), no.~6, 1121--1142; MR4264079, Zbl 1467.19003.
  
  
 \bibitem{KST} M.~C. Kerz, F. Strunk and G. Tamme, Algebraic $K$-theory and descent for blow-ups, Invent. Math. {\bf 211} (2018), no.~2, 523--577; MR3748313, Zbl 1391.19007.
 
 
  \bibitem{Krishna} A. Krishna, On the negative $K$-theory of schemes in finite characteristic, J. Algebra {\bf 322} (2009), no.~6, 2118--2130; MR2542834, Zbl 1186.19002.
  
  
  \bibitem{Milne} J.~S. Milne, {\it \'Etale cohomology},  
Princeton Mathematical Series, 33, Princeton Univ. Press, Princeton, NJ, 2025] \copyright 1980; MR4904233, Zbl 0433.14012.
  
  \bibitem{DQ} D.~G. Quillen, Higher algebraic $K$-theory. I, in {\it Algebraic $K$-theory, I: Higher $K$-theories (Proc. Conf., Battelle Memorial Inst., Seattle, Wash., 1972)}, pp. 85--147, Lecture Notes in Math., Vol. 341, Springer, Berlin-New York, ; MR0338129, Zbl 0292.18004.
  
 \bibitem{RP} A. Rosenschon and P.~A. \O stv\ae r, Descent for $K$-theories, J. Pure Appl. Algebra {\bf 206} (2006), no.~1-2, 141--152; MR2220086, Zbl 1094.19003.


 
 \bibitem{VS}  V. Sadhu, On the vanishing of relative negative $K$-theory, J. Algebra Appl. {\bf 19} (2020), no.~8, 2050152, 14 pp.; MR4131584,  Zbl 1455.14016.

 

  
 \bibitem{CS} C. Scheiderer, Quasi-augmented simplicial spaces, with an application to cohomological dimension, J. Pure Appl. Algebra {\bf 81} (1992), no.~3, 293--311; MR1179103, Zbl 0765.55001.
  
  
  
  
  
  \bibitem{stap} J. Stapleton, Weibel's conjecture for twisted $K$-theory, Ann. K-Theory {\bf 5} (2020), no.~3, 621--637; MR4132749, Zbl 1458.19001.
 
  \bibitem{Wei80} C.~A. Weibel, $K$-theory and analytic isomorphisms, Invent. Math. {\bf 61} (1980), no.~2, 177--197; MR0590161, Zbl 0437.13009.
 



 \bibitem{wei 1} C.~A. Weibel, {\it The $K$-book}, Graduate Studies in Mathematics, 145, Amer. Math. Soc., Providence, RI, 2013; MR3076731, Zbl 1273.19001.
 
 \end{thebibliography}
\end{document}